\setlist[enumerate]{label={\upshape(\alph*)}}
\newcommand{\den}{\mbox{Den}}
\tikzstyle{vertex}=[circle, draw, inner sep=0pt, minimum size=4pt,fill=black]
\newcommand{\vertex}{\node[vertex]}
\tikzstyle{hollowvertex}=[circle, draw, inner sep=0pt, minimum size=4pt]
\tikzstyle{namedvertex}=[circle, draw, inner sep=1pt, minimum size=12pt]
\tikzstyle{phantomvertex}=[circle, draw, inner sep=0pt, minimum size=4pt,color=white]
\tikzset{
  .../.tip={[sep=2pt 2]
    Round Cap[]. Circle[length=0pt 2,sep=2pt] Circle[length=0pt 2,sep=2pt] Circle[length=0pt 2, sep=2pt 2]}}
\newtheorem{theorem}{Theorem}[section]
\newtheorem{lemma}[theorem]{Lemma}
\newtheorem{proposition}[theorem]{Proposition}
\theoremstyle{definition}
\newtheorem{remark}[theorem]{Remark}
\newtheorem{conjecture}[theorem]{Conjecture}
\newtheorem{problem}{Problem}
\begin{document}

\title{On the mean subtree order of graphs under edge addition}
\author{Ben Cameron\\
\small School of Computer Science\\
\small University of Guelph\\
\small ben.cameron@uoguelph.ca\\
\\
Lucas Mol\\
\small Department of Mathematics and Statistics\\
\small The University of Winnipeg\\
\small l.mol@uwinnipeg.ca}
\date{November 2019}

\maketitle

\begin{abstract}
\noindent
For a graph $G$, the \textit{mean subtree order} of $G$ is the average order of a subtree of $G$. In this note, we provide counterexamples to a recent conjecture of Chin, Gordon, MacPhee, and Vincent, that for every connected graph $G$ and every pair of distinct vertices $u$ and $v$ of $G$, the addition of the edge between $u$ and $v$ increases the mean subtree order. In fact, we show that the addition of a single edge between a pair of nonadjacent vertices in a graph of order $n$ can decrease the mean subtree order by as much as $n/3$ asymptotically. We propose the weaker conjecture that for every connected graph $G$ which is not complete, there exists a pair of nonadjacent vertices $u$ and $v$, such that the addition of the edge between $u$ and $v$ increases the mean subtree order.  We prove this conjecture in the special case that $G$ is a tree.

\noindent
{\bf Keywords:} subtree, mean subtree order
\end{abstract}

\section{Introduction}

Throughout, we assume that \emph{graphs} are finite, loopless, and contain no multiple edges, while \emph{multigraphs} are finite, loopless, and may contain multiple edges.  In particular, every graph is a multigraph.  A \emph{subtree} of a multigraph $G$ is a (not necessarily induced) subgraph of $G$ which is a tree.  By convention, the null graph is not considered a subtree of any multigraph.  The study of subtrees of trees goes back at least to Jamison~\cite{Jamison1983, Jamison1984}, whose work on the mean order of the subtrees of a tree has received considerable attention in the last decade~\cite{VinceWang2010,WagnerWang2014,WagnerWang2016,Haslegrave2014,MolOellermann2019}. 

Recently, Chin, Gordon, MacPhee, and Vincent~\cite{ChinGordonMacpheeVincent2018} initiated the study of subtrees of multigraphs in general. For a given multigraph $G$, two parameters introduced by Chin et al.~are the mean subtree order of $G$, denoted $\mu(G)$, and the proportion of subtrees of $G$ that are spanning, denoted $P(G)$.  Among other things, Chin et al.~proved that $P(K_n)$ tends to $e^{-1/e}$, and that $P(K_{n,n})$ tends to $e^{-2/e}$, as $n$ tends to $\infty$.  They also suggested many interesting problems and conjectures.

Several of these conjectures on the proportion of subtrees that are spanning have very recently been resolved by Wagner~\cite{Wagner2019}.  Wagner has shown that if $G_n$ is a sequence of sparse random graphs (i.e., if $G_n$ is the Erd\H{o}s-Renyi graph $G(n,p_n)$, where $p_n\rightarrow 0$), then $P(G_n)$ tends to $0$.  On the other hand, if $G_n$ is a sequence of dense random graphs (i.e., if $G_n=G(n,p_n)$, where $p_n\rightarrow p_\infty >0$), then $P(G_n)$ tends to the positive number $e^{-1/ep_{\infty}}$.

In this note, we are concerned with the following conjecture of Chin et al.~\cite{ChinGordonMacpheeVincent2018}, and some related problems.

\begin{conjecture}[Conjecture 7.4 in \cite{ChinGordonMacpheeVincent2018}]\label{ChinConjecture}
Suppose that $G$ is a connected multigraph, and that $H$ is obtained from $G$ by adding an edge
between two distinct vertices of $G$. Then $\mu(G)<\mu(H)$.
\end{conjecture}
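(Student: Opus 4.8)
The plan is to localize the comparison $\mu(G)<\mu(H)$ to the subtrees that actually use the new edge. Write $e=uv$ for the added edge. Every subtree of $H$ either avoids $e$, in which case it is exactly a subtree of $G$, or contains $e$. Letting $N(G)$ and $S(G)$ denote the number of subtrees of $G$ and the sum of their orders, so that $\mu(G)=S(G)/N(G)$, and letting $N_e,S_e$ denote the corresponding quantities for the subtrees of $H$ that contain $e$, we have $N(H)=N(G)+N_e$ and $S(H)=S(G)+S_e$. Cross-multiplying in $\mu(H)>\mu(G)$ and cancelling the common term $N(G)S(G)$ shows that the desired inequality is equivalent to
\[
\frac{S_e}{N_e} > \mu(G),
\]
that is, the mean order of the subtrees using $e$ must exceed the mean order of all subtrees of $G$. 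It therefore suffices to prove this single inequality, which is the natural target because the new edge only adds subtrees, and one expects those new subtrees to be comparatively large.

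Second, I would set up a bijection to make $S_e/N_e$ tractable. Since any subtree $T$ of $H$ containing $e$ is a tree, $e$ is a bridge of $T$; deleting it splits $T$ into two vertex-disjoint subtrees $T_u\ni u$ and $T_v\ni v$, each a subtree of $H-e=G$, and conversely any such disjoint pair reassembles uniquely into a subtree of $H$ through $e$, with $|T|=|T_u|+|T_v|$. Thus the subtrees through $e$ are in bijection with pairs of vertex-disjoint subtrees of $G$, one containing $u$ and one containing $v$. I would then write $N_e$ and $S_e$ as sums over such pairs and try to show that the forced presence of both a $u$-part and a disjoint $v$-part pushes the average order $S_e/N_e$ above $\mu(G)$, the heuristic being that a subtree meeting both endpoints of $e$ cannot be too small.

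The main obstacle is precisely this last step. The disjointness constraint couples the two rooted families, so $S_e/N_e$ does not factor cleanly through single-vertex statistics and must be estimated with care. The deeper difficulty is that the family of subtrees through $e$ always contains small members — the edge $e$ itself is a subtree of order only $2$ — and when $u$ and $v$ admit many short connections these small trees are numerous, so the lower bound one hopes for is far from automatic. The heart of the proof must therefore be a quantitative compensation argument showing that the subtrees through $e$ that genuinely reach deep into both sides of $G$ are plentiful and heavy enough to hold $S_e/N_e$ above $\mu(G)$; the most promising route is an order-increasing injection from the subtrees of $G$ into the subtrees of $H$ through $e$, or a careful pairing that matches each small subtree through $e$ against a sufficiently large one. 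Making such a pairing work uniformly over all connected $G$ and all choices of $u,v$ is the crux on which the entire argument rests.
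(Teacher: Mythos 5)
You are attempting to prove a statement that is false; the paper in question is devoted precisely to refuting Conjecture~\ref{ChinConjecture}, so no completion of your plan can succeed. Your opening reduction is correct and in fact coincides with the paper's own starting identity: writing $S_H(x)=S_{H,e}(x)+S_G(x)$ exhibits $\mu(H)$ as a weighted average of $\mu(H,e)$ and $\mu(G)$ (equation~(\ref{Weighted})), so $\mu(H)>\mu(G)$ holds if and only if your target inequality $S_e/N_e=\mu(H,e)>\mu(G)$ holds. But that inequality fails in general, and the uniformity you flag as the crux (``making such a pairing work uniformly over all connected $G$ and all choices of $u,v$'') is exactly where it dies: no order-increasing injection or compensation argument can exist. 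The unique smallest counterexample is the order-$7$ graph of Figure~\ref{CounterexampleFigure}, and the paper's family $T_n$ shows the failure is not even marginal: take a path of order $n-2s_n$ with $s_n$ leaves attached to each endvertex $u,v$, where $s_n=o(n)$ and $2^{s_n}\geq n^2$, and let $e=uv$. Then $\mu(T_n)>n-s_n-1$, while $S_{G_n,e}(x)=(1+x)^{2s_n}S_{C_{n-2s_n},e}(x)$ together with Lemma~\ref{LocalCycleLemma} gives $\mu(G_n,e)=\tfrac{2n-s_n+2}{3}\approx\tfrac{2n}{3}$, so the mean order of the subtrees through $e$ sits roughly $n/3$ \emph{below} $\mu(T_n)$, not above it.

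The flaw in your heuristic is instructive. You argue that a subtree meeting both endpoints of $e$ ``cannot be too small,'' but you overlook the opposing constraint: a subtree containing $e$ can never contain the whole cycle that $e$ creates, and its intersection with that cycle is a path through $e$, which by Lemma~\ref{LocalCycleLemma} omits on average about a third of the cycle. In your own decomposition, the disjointness of $T_u$ and $T_v$ is exactly this loss: the two pieces must jointly avoid part of the $uv$-path. In $T_n$ the typical subtree contains leaves on both sides and hence the \emph{entire} path, so conditioning on $e$ actively shrinks subtrees, from mean roughly $n$ down to roughly $2n/3$; and since $2^{s_n}\geq n^2$ forces $S_{T_n}(1)/S_{G_n}(1)\to 0$, almost all subtrees of $G_n$ contain $e$, whence $\den(T_n)-\den(G_n)\to\tfrac{1}{3}$ (Theorem~\ref{MainDecrease}). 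What survives of the intuition you are pursuing is only the existential version, Conjecture~\ref{AddConjecture}: for a non-complete connected $G$ there should exist \emph{some} nonadjacent pair whose joining increases $\mu$, and the paper proves this for trees by choosing the pair carefully so that $\mu(H,e)\geq\mu(T,u)>\mu(T)$ -- precisely the local-versus-global comparison you set up, but applied to a well-chosen edge rather than an arbitrary one.
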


\noindent
We will be most interested in the case that $G$ and $H$ are graphs, i.e., that $G$ contains no multiple edges, and $H$ is obtained from $G$ by adding an edge between two distinct, nonadjacent vertices.  As pointed out by Chin et al., if Conjecture~\ref{ChinConjecture} were true, it would follow immediately that the complete graph has the largest mean subtree order among all connected graphs of a given order -- this problem is still open.

However, some small counterexamples to Conjecture~\ref{ChinConjecture} (where $G$ and $H$ are both graphs) arise from a computer search.  The graph $G$ of order $7$ shown in Figure~\ref{CounterexampleFigure} is the unique connected graph up to isomorphism of order at most $7$ for which Conjecture~\ref{ChinConjecture} fails.  The graph obtained from $G$ by adding an edge between the vertices $a$ and $b$ has mean subtree order approximately $0.000588$ smaller than the mean subtree order of $G$.  Up to isomorphism, there are $347$ graphs of order $8$ for which Conjecture~\ref{ChinConjecture} fails, though the largest decrease in the mean subtree order among all of these counterexamples is still small at approximately $0.0395$. We can even exhibit an infinite family of counterexamples to Conjecture~\ref{ChinConjecture} fairly easily.  If $n\geq 8$, and $H_n$ is the graph obtained from $K_{2,n-2}$ by joining the vertices in the partite set of cardinality $2$, then it can be shown that $\mu(H_n)<\mu(K_{2,n-2})$. However, the difference $\mu(K_{2,n-2})-\mu(H_n)$ tends to $0$ as $n$ tends to $\infty$, i.e., the decrease in the mean subtree order becomes arbitrarily small.

This raises the question: If $H$ is obtained from a graph $G$ by adding an edge between a pair of distinct, nonadjacent vertices, then how large can the difference $\mu(G)-\mu(H)$ be? We show that this difference can grow linearly in the order of $G$. More precisely, we show that  if $G$ has order $n$, then $\mu(G)-\mu(H)$ can be as large as $n/3$ asymptotically.

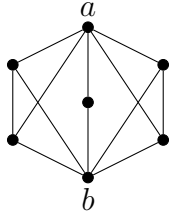
\begin{figure}
    \centering
    \begin{tikzpicture}
    \vertex (a) at (0,2) {};
    \node[above] at (a) {$a$};
    \vertex (b) at (0,0) {};
    \node[below] at (b) {$b$};
    \vertex (c) at (0,1) {};
    \vertex (d) at (-1,0.5) {};
    \vertex (e) at (-1,1.5) {};
    \vertex (f) at (1,0.5) {};
    \vertex (g) at (1,1.5) {};
    \path
    (a) edge (c)
    (b) edge (c)
    (a) edge (d)
    (a) edge (e)
    (a) edge (f)
    (a) edge (g)
    (b) edge (d)
    (b) edge (e)
    (b) edge (f)
    (b) edge (g)
    (d) edge (e)
    (f) edge (g);
    \end{tikzpicture}
    \caption{The smallest counterexample to Conjecture~\ref{ChinConjecture}.  Adding an edge between $a$ and $b$ decreases the mean subtree order.}
    \label{CounterexampleFigure}
\end{figure}

Although Conjecture~\ref{ChinConjecture} is false, we propose the following weaker conjecture, which we prove in the special case that $G$ is a tree.

\begin{conjecture}\label{AddConjecture}
Suppose $G$ is a connected graph which is not complete. Then there is a graph $H$, obtained from $G$ by joining two distinct, nonadjacent vertices, such that $\mu(H)>\mu(G).$
\end{conjecture}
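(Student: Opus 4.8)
The plan is to reduce the problem to a statement about the mean order of the \emph{new} subtrees created by a single edge addition, and then to exhibit a non-adjacent pair for which this mean is large. The reduction is valid for an arbitrary connected graph, not only for a tree, so I would set it up in full generality. Let $G$ have $N$ subtrees whose orders sum to $\Sigma$, so that $\mu(G)=\Sigma/N$. If $H=G+uv$ for a non-adjacent pair $\{u,v\}$, then the subtrees of $H$ are exactly the subtrees of $G$ together with those subtrees of $H$ that use the edge $uv$; since $uv$ is the only new edge, deleting it from such a subtree $T$ leaves two vertex-disjoint subtrees of $G$, one containing $u$ and one containing $v$. This gives a bijection between the subtrees of $H$ through $uv$ and the ordered pairs $(A,B)$ of vertex-disjoint subtrees of $G$ with $u\in A$ and $v\in B$, under which $|V(T)|=|A|+|B|$. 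Writing $N_{uv}$ and $\Sigma_{uv}$ for the number and the total order of these new subtrees and $\mu_{uv}:=\Sigma_{uv}/N_{uv}$ for their mean order, cross-multiplying in $\mu(H)=(\Sigma+\Sigma_{uv})/(N+N_{uv})$ yields the clean criterion
\[ \mu(H)>\mu(G)\iff \mu_{uv}>\mu(G). \]
Thus the conjecture is equivalent to: for every connected non-complete $G$ there is a non-adjacent pair $\{u,v\}$ for which the mean order of the vertex-disjoint subtree pairs separating $u$ and $v$ exceeds the global mean $\mu(G)$.

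Second, to locate such a pair I would argue by contradiction and average over all non-edges. Suppose $\mu_{uv}\le\mu(G)$ for every non-edge $uv$, i.e.\ $\Sigma_{uv}\le\mu(G)\,N_{uv}$ throughout. Summing over all non-edges and expanding both sides through the bijection above gives
\[ \sum_{(A,B)}\bigl[(|A|+|B|)-\mu(G)\bigr]\,c(A,B)\le 0, \]
where the sum runs over ordered vertex-disjoint subtree pairs $(A,B)$ and $c(A,B)$ counts the non-edges of $G$ with one endpoint in $A$ and the other in $B$. Here large pairs contribute a positive bracket and small pairs a negative one, so the goal becomes to prove this weighted sum is strictly positive whenever $G$ is connected and not complete, which would contradict the assumption and force some good edge to exist. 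To make the positive contributions dominate I would not rely on raw averaging but would choose $\{u,v\}$ structurally — for instance a pair at distance two through a common neighbour (such a pair always exists in a connected non-complete graph), or a pair realizing the diameter — so that $G$ splits into two substantial pieces around $u$ and $v$ supporting many large disjoint subtree pairs.

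The hard part, and the reason only the tree case is settled here while the general statement remains conjectural, is precisely the weighting factor $c(A,B)$. When $G$ is a tree, $G$ itself is acyclic, every pair of vertices is separated by a unique path, and the relevant disjoint-pair sums can be evaluated by a leaf-peeling/edge-deletion recursion that makes the dominance of large pairs transparent; this is the route I would follow to prove the conjecture for $G$ a tree. For a general graph, however, $c(A,B)$ couples the subtree geometry to the possibly dense non-edge structure of $G$: a pair $(A,B)$ can be large yet carry $c(A,B)=0$ when $A$ and $B$ are completely joined, so exactly the terms one wants to exploit may be annihilated. I expect this decoupling problem — replacing the $c(A,B)$ weighting by a tractable one, or pinning down a single structurally chosen pair $\{u,v\}$ whose large disjoint subtree pairs survive the adjacency constraint — to be the genuine obstacle to extending the tree argument to all connected non-complete graphs.
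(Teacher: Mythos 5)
Your first reduction is correct and matches the paper's setup: the bijection between subtrees of $H=G+uv$ through the new edge and vertex-disjoint pairs $(A,B)$ with $u\in A$, $v\in B$ is sound, and the criterion $\mu(H)>\mu(G)\iff\mu_{uv}>\mu(G)$ is exactly the paper's observation that $S_H(x)=S_{H,e}(x)+S_G(x)$ makes $\mu(H)$ a weighted average of $\mu(H,e)$ and $\mu(G)$. But from that point on you have a plan, not a proof. The averaging-over-all-non-edges contradiction is never carried out, and your own remark that large pairs $(A,B)$ can carry weight $c(A,B)=0$ shows why it is unlikely to close as stated. More importantly, the statement is a conjecture that the paper itself only establishes when $G$ is a tree, and your proposal does not actually prove the tree case either: ``a leaf-peeling/edge-deletion recursion that makes the dominance of large pairs transparent'' is a name for an argument, not an argument. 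No specific non-adjacent pair is chosen, and no inequality is verified.

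Compare this with what the paper actually does for trees, which is where the real work lies. It chooses a vertex $u$ such that two components $P,Q$ of $T-u$ (of orders $p,q$) extend $u$ to pendant paths, and joins the neighbours $v\in P$ and $w\in Q$ of $u$ --- a structurally pinned-down non-adjacent pair of the kind you say you would want but never produce. The payoff of this choice is that the relevant local polynomials factor explicitly: with $f_k(x)=\sum_{i=0}^k x^i$ and $R=T-(V(P)\cup V(Q))$, one gets $S_{H,e}(x)=x^2f_{p-1}(x)f_{q-1}(x)\bigl(1+2S_{R,u}(x)\bigr)$ and $S_{T,u}(x)=f_p(x)f_q(x)S_{R,u}(x)$, so by additivity of logarithmic derivatives the comparison $\mu(H,e)\geq\mu(T,u)$ collapses to the trivial inequality $1+2S_{R,u}(1)>\mu(R,u)$. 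The second key ingredient, which your proposal lacks entirely, is Jamison's theorem that $\mu(T,u)>\mu(T)$ for every vertex $u$ of a tree: it lets the paper compare the new-subtree mean to a \emph{local} mean rather than to the global mean directly, sidestepping precisely the global summation you were trying to control. Without a concrete choice of pair and without this local-mean pivot, your outline identifies the right target but does not reach it, even in the tree case.
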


\noindent
The truth of Conjecture~\ref{AddConjecture} would still imply that the complete graph has the largest mean subtree order among all connected graphs of a given order.

The layout of the remainder of the note is as follows.  In Section~\ref{PrelimSection}, we provide the necessary background and notation.  In Section~\ref{CounterexampleSection}, we demonstrate that the addition of a single edge to a graph of order $n$ can decrease the mean subtree order by as much as $n/3$ asymptotically.  In Section~\ref{TreeSection}, we prove Conjecture~\ref{AddConjecture} in the special case that $G$ is a tree.  We also show that Conjecture~\ref{AddConjecture} holds for every graph $G$ if we drop the condition that the two vertices being joined are nonadjacent (i.e., we can increase the mean subtree order of $G$ by adding a new edge parallel to an existing edge in $G$).

\section{Preliminaries}\label{PrelimSection}

Let $G$ be a graph of order $n$, and let $\mathcal{T}_G$ be the set of subtrees of $G$.  The \emph{subtree polynomial} of $G$, denoted $S_G(x)$, is given by
\[
S_G(x)=\sum_{T\in\mathcal{T}_G}x^{|V(T)|}.
\]
The \emph{mean subtree order} of $G$, denoted $\mu(G)$, is the average order of a subtree of $G$.  It is straightforward to show that
\[
\mu(G)=\frac{S'_G(1)}{S_G(1)}.
\]
The \emph{density} of $G$, denoted $\den(G)$, is the mean subtree order of $G$ divided by the order of $G$, that is,
\[
\den(G)=\frac{\mu(G)}{n}.
\]

\begin{remark}
Chin et al.~\cite{ChinGordonMacpheeVincent2018} focused on the \emph{size} of the subtrees of a graph, while we focus on their \emph{order}, which is more in line with the work of Jamison~\cite{Jamison1983}.  It is straightforward to switch back and forth between these two points of view.  In particular, the mean subtree order of $G$ (denoted $\mu(G)$ here) is equal to one plus the mean subtree size of $G$ (denoted $\mu(G)$ by Chin et al.~\cite{ChinGordonMacpheeVincent2018}).  
\end{remark}

Let $G$ be a graph of order $n$ and let $p$ be either a vertex or an edge of $G$.  Let $\mathcal{T}_{G,p}$ be the set of subtrees of $G$ containing $p$.  The \emph{local subtree polynomial} of $G$ at $p$, denoted $S_{G,p}(x)$, is given by
\[
S_{G,p}(x)=\sum_{T\in \mathcal{T}_{G,p}}x^{|V(T)|}.
\]
The \emph{local mean subtree order} of $G$ at $p$, denoted $\mu(G,p)$, and
the \emph{local density} of $G$ at $p$, denoted $\den(G,p)$, are defined analogously to the global versions given above:
\[
\mu(G,p)=\frac{S'_{G,p}(1)}{S_{G,p}(1)}, \mbox{ and } \den(G,p)=\frac{\mu(G,p)}{n}.
\]

The \textit{logarithmic derivative} of a function $f(x)$ is defined by $\frac{f'(x)}{f(x)}$ for all values of $x$ for which $f$ is differentiable and nonzero. For any function $f(x)$ whose logarithmic derivative exists at $1$, we abuse notation slightly and let $\mu(f(x))$ denote the logarithmic derivative of $f(x)$ evaluated at $1$, i.e., $\mu(f(x))=f'(1)/f(1).$  So, for example, we have $\mu(S_G(x))=\mu(G)$.  This notation is particularly useful when we can write a (local) subtree polynomial as a product of other polynomials, because of the following elementary lemma.

\begin{lemma}\label{logsum}
Let $f$, $g$, and $h$ be functions whose logarithmic derivatives exist at $1$.  If $f(x)=g(x)h(x)$, then
\[
\mu(f(x))=\mu(g(x))+\mu(h(x)).
\]
\end{lemma}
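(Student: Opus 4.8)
The plan is to prove the multiplicative-to-additive identity for logarithmic derivatives by direct computation from the definition of $\mu$. Recall that for a function $f$ whose logarithmic derivative exists at $1$, we have set $\mu(f(x)) = f'(1)/f(1)$. So the statement to establish is simply that if $f(x) = g(x)h(x)$, then
\[
\frac{f'(1)}{f(1)} = \frac{g'(1)}{g(1)} + \frac{h'(1)}{h(1)}.
\]

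First I would differentiate the product $f(x) = g(x)h(x)$ using the ordinary product rule, obtaining $f'(x) = g'(x)h(x) + g(x)h'(x)$. Then I would evaluate both $f$ and $f'$ at $x = 1$, giving $f(1) = g(1)h(1)$ and $f'(1) = g'(1)h(1) + g(1)h'(1)$. Forming the quotient $f'(1)/f(1)$ and substituting these expressions yields
\[
\frac{f'(1)}{f(1)} = \frac{g'(1)h(1) + g(1)h'(1)}{g(1)h(1)}.
\]
Splitting the single fraction into two and cancelling $h(1)$ from the first term and $g(1)$ from the second then gives exactly $g'(1)/g(1) + h'(1)/h(1)$, as desired.

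The only point requiring any care is the justification that all the quantities appearing are well-defined, so that the division steps are legitimate. Since the logarithmic derivatives of $g$ and $h$ are assumed to exist at $1$, both $g$ and $h$ are differentiable and nonzero at $1$; hence $g(1)h(1) = f(1) \neq 0$, so the cancellations are valid and $\mu(f(x))$ makes sense. This is a routine verification rather than a genuine obstacle: the identity is, after all, just the familiar statement that the logarithmic derivative of a product is the sum of the logarithmic derivatives, specialized to the point $x = 1$. I do not anticipate any real difficulty, and the whole argument should fit comfortably in a few lines.
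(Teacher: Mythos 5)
Your proof is correct and is exactly the standard product-rule computation one would expect; the paper states Lemma~\ref{logsum} as an elementary fact without proof, and your argument (including the check that $g(1)h(1)\neq 0$ so the divisions are legitimate) supplies precisely the routine verification that was omitted.
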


We will require the following straightforward lemma.

\begin{lemma}\label{LocalCycleLemma}
Let $n\geq 3$ and let $e$ be an edge of $C_n$.  Then
\[
S_{C_n,e}(x)=x^2\sum_{i=0}^{n-2}(i+1)x^i.
\]
In particular, we have
\begin{align*}
S_{C_n,e}(1)=\binom{n}{2}, \ \ \ S_{C_n,e}'(1)=\frac{n(n-1)(2n+2)}{6} \ \ \ \mbox{and} \ \ \ \mu(C_n,e)=\frac{2n+2}{3}.
\end{align*}
\end{lemma}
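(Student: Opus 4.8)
The plan is to set up an explicit correspondence between the subtrees of $C_n$ containing $e$ and the ways of extending a path outward from the two endpoints of $e$, and then to read the generating polynomial directly off this description. First I would observe that every subtree of $C_n$ is a path: a connected acyclic subgraph of a cycle cannot contain the whole cycle, so it is a (possibly trivial) path. Writing $e=ab$ with $a,b$ adjacent on the cycle, I would note that deleting $e$ turns $C_n$ into a path $P$ on $n$ vertices with endpoints $a$ and $b$, say $a=w_0,w_1,\dots,w_{n-1}=b$.

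Next, I would characterize the subtrees $T$ containing $e$. Any such $T$ contains the edge $ab$, so $T-e$ splits as the disjoint union of a subpath of $P$ starting at $a$, namely $w_0,\dots,w_s$, and a subpath ending at $b$, namely $w_{n-1-t},\dots,w_{n-1}$, for some integers $s,t\geq 0$. Re-adding $e$ joins these into a single path exactly when the two subpaths are disjoint, i.e.\ when $s+t\leq n-2$; conversely, every such pair $(s,t)$ yields a distinct subtree containing $e$. Since such a $T$ has $|V(T)|=s+t+2$ vertices, grouping by $k=s+t$ and counting the $k+1$ pairs $(s,t)$ with $s+t=k$ gives
\[
S_{C_n,e}(x)=\sum_{k=0}^{n-2}(k+1)\,x^{k+2}=x^2\sum_{i=0}^{n-2}(i+1)x^i,
\]
as claimed.

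Finally, the three closed forms follow by routine evaluation. Setting $x=1$ gives $S_{C_n,e}(1)=\sum_{i=0}^{n-2}(i+1)=\binom{n}{2}$. Differentiating the middle expression termwise yields $S'_{C_n,e}(1)=\sum_{k=0}^{n-2}(k+1)(k+2)$, which equals $\frac{n(n-1)(n+1)}{3}=\frac{n(n-1)(2n+2)}{6}$ by the standard identity $\sum_{j=1}^{m}j(j+1)=\frac{m(m+1)(m+2)}{3}$ applied with $m=n-1$. Dividing then gives $\mu(C_n,e)=S'_{C_n,e}(1)/S_{C_n,e}(1)=\frac{2n+2}{3}$.

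The only point requiring genuine care, rather than an obstacle, is the combinatorial bookkeeping in the second step: verifying that the disjointness condition is precisely $s+t\leq n-2$ (so that the two one-sided extensions never overlap and never recreate the full cycle), and that the assignment $(s,t)\mapsto T$ is a true bijection with no double counting. Everything after that is a mechanical substitution into known summation formulas.
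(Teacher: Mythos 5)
Your proof is correct and follows essentially the same route as the paper: both arguments directly count the subtrees of $C_n$ containing $e$ by order, concluding that there are exactly $i+1$ of order $i+2$, and then evaluate the resulting polynomial. The only cosmetic difference is the bookkeeping device --- you parametrize each subtree by the lengths $(s,t)$ of the two arms growing out of the endpoints of $e$, while the paper handles spanning subtrees separately and matches the non-spanning ones with subtrees of the path $P_{n-2}$ obtained by deleting the endpoints of $e$; both yield the identical coefficient count and the same closed forms.
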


\begin{proof}
First of all, the spanning subtrees of $C_n$ that contain $e$ correspond to the $n-1$ edges of $C_n$ not equal to $e$.  Now let $u$ and $v$ be the vertices incident to $e$ in $C_n$, and let $P_{n-2}$ be the path obtained by deleting $u$ and $v$ from $C_n$.  The non-spanning subtrees of $C_n$ containing $e$ correspond to the subtrees of $P_{n-2}$.  For $0\leq i\leq n-3$, there are exactly $i+1$ subtrees of $P_{n-2}$ of order $n-i-2$, and hence there are exactly $i+1$ subtrees of $C_n$ of order $i+2$ that contain $e$.  Thus we have
\begin{align*}
S_{C_n,e}(x)&=(n-1)x^n+\sum_{i=0}^{n-3}(i+1)x^{i+2}=\sum_{i=0}^{n-2}(i+1)x^{i+2}=x^2\sum_{i=0}^{n-2}(i+1)x^i
\end{align*}
Using well-known summation formulae, we find
\[
S_{C_n,e}(1)=\sum_{i=0}^{n-2}(i+1)=\binom{n}{2}
\]
and
\begin{align*}
S'_{C_n,e}(1)&=2\sum_{i=0}^{n-2}(i+1)+\sum_{i=0}^{n-2}i(i+1)\\
&=\sum_{i=0}^{n-2}(i+1)+\sum_{i=0}^{n-2}(i+1)^2\\
&=\binom{n}{2}+\frac{n(n-1)(2n-1)}{6}\\
&=\frac{n(n-1)(2n+2)}{6}.\qedhere
\end{align*}
\end{proof}

\section{Decreasing the mean subtree order}\label{CounterexampleSection}

This section is devoted to the proof of the following theorem.

\begin{theorem}\label{MainDecrease}
Adding an edge between two distinct, nonadjacent vertices of a connected graph can decrease the density by an amount arbitrarily close to $1/3$.
\end{theorem}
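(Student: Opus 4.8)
The plan is to exhibit an explicit family $G_\ell$ together with a nonadjacent pair $x,y$, so that joining them drops the density from nearly $1$ to nearly $2/3$. Fix a path $x=u_0,u_1,\dots,u_\ell=y$ of length $\ell$, attach to $x$ a clique $A\cong K_m$ and to $y$ a clique $B\cong K_m$ (each meeting the path only in $x$, resp.\ $y$), and let $m=\lfloor\sqrt{\ell}\rfloor$, so that $\log\ell\ll m\ll\ell$. Call the result $G=G_\ell$; it has order $N=2m+\ell-1$, and $x,y$ are nonadjacent. The two cliques make the subtrees meeting both ends very numerous, which will be essential. Let $H=G+xy$. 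The key structural observation is that the path together with the new edge $xy$ is exactly a cycle $C_{\ell+1}$, which is where Lemma~\ref{LocalCycleLemma} enters.

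First I would record that the subtrees of $H$ are those of $G$ together with those containing $xy$, so
$\den(H)=\frac{S_G'(1)+S_{H,xy}'(1)}{N\,(S_G(1)+S_{H,xy}(1))}$ is the weighted mean of $\den(G)$ and the local density $\den(H,xy)$. The whole proof then reduces to three asymptotic claims: (i) $\den(G)\to 1$; (ii) $\den(H,xy)\to 2/3$; and (iii) the subtrees through $xy$ flood, i.e.\ $S_{H,xy}(1)/S_G(1)\to\infty$, so that $\den(H)\to\den(H,xy)$. Granting these, $\den(G)-\den(H)\to 1-\tfrac23=\tfrac13$, which is the theorem.

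For (ii), every subtree of $H$ containing $xy$ meets the cycle $C_{\ell+1}$ in an arc through $xy$, meets $A$ in a subtree through $x$, and meets $B$ in a subtree through $y$, and these three pieces may be chosen independently and glued at the cut vertices $x$ and $y$. This gives the factorization $S_{H,xy}(x)=S_{C_{\ell+1},xy}(x)\,S_{A,x}(x)\,S_{B,y}(x)/x^{2}$, and Lemma~\ref{logsum} then yields $\mu(H,xy)=\mu(C_{\ell+1},xy)+\mu(A,x)+\mu(B,y)-2$. By Lemma~\ref{LocalCycleLemma}, $\mu(C_{\ell+1},xy)=\frac{2\ell+4}{3}$, while $\mu(A,x),\mu(B,y)\le m=o(\ell)$; since $N=\ell+o(\ell)$, dividing by $N$ gives $\den(H,xy)\to 2/3$. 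The same factorization computes $S_{H,xy}(1)=\binom{\ell+1}{2}\,S_{A,x}(1)\,S_{B,y}(1)$ exactly, which will drive the flood in (iii). For (i), the point is that in $G$ any subtree meeting both blobs must traverse the entire path, so such spanning-both subtrees have order at least $\ell$ (hence nearly all of $N$); they number $S_{A,x}(1)\,S_{B,y}(1)$, and once this product dominates the remaining $O(\ell^{2})$ path-only and single-blob contributions, it governs both $S_G(1)$ and its order-weight, forcing $\den(G)\to 1$.

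The main obstacle is the choice of $m$ and the domination estimates it must support. The blobs must be large enough that $S_{A,x}(1)\,S_{B,y}(1)$ (of order roughly $m^{2m}$ for cliques) dwarfs the $\Theta(\ell^{2})$ subtrees living only on the path, so that the spanning-both family dominates $S_G(1)$ in count (needed for the flood in (iii)) and in total order (needed for $\den(G)\to1$); yet they must be small enough, $m=o(\ell)$, that their orders do not perturb the leading $\tfrac{2\ell}{3}$ and $\ell$ terms of the two densities. Taking $m=\lfloor\sqrt\ell\rfloor$ is meant to meet both demands simultaneously, and the delicate part of the write-up will be the uniform bounds showing that every subtree outside the two dominant families (spanning-both for $G$, through-$xy$ for $H$) is asymptotically negligible in both numerator and denominator.
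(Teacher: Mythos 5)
Your proposal follows essentially the same route as the paper: partition the subtrees of $H$ at the new edge, use Lemma~\ref{LocalCycleLemma} and Lemma~\ref{logsum} to show the local density at that edge tends to $2/3$, and attach large ``blobs'' at the two endpoints of a long path so that the original density tends to $1$ while the subtrees through the new edge swamp all others; the only difference is that you use cliques $K_{\lfloor\sqrt{\ell}\rfloor}$ where the paper uses stars of $s_n$ leaves (which keeps its base graph a tree and lets it quote counts from Mol--Oellermann). One small slip to fix in the write-up: the single-blob contributions to $S_G(1)$ are on the order of $\ell\cdot S_{A,x}(1)$, not $O(\ell^2)$ as you state, though they are still negligible against $S_{A,x}(1)S_{B,y}(1)$ since $S_{B,y}(1)\geq 2^{m-1}\gg \ell$, so the argument goes through.
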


Let $\{s_n\}_{n\geq k}$ be a sequence of nonnegative integers satisfying:
\begin{enumerate}
    \item $2s_n\leq n-3$ for all $n\geq k$;
    \item $s_n=o(n)$, i.e., $\displaystyle\lim_{n\rightarrow\infty}\frac{s_n}{n}=0$; and
    \item $2^{s_n}\geq n^2$ for all $n\geq k$.
\end{enumerate}
Many such sequences exist.  Take, for example, the sequence $\{\lceil 2 \log_2(n)\rceil\}_{n\geq 32}$.
For all $n\geq k$, let $T_{n}$ be the tree obtained from a path of order $n-2s_n$ (which is at least $3$ by condition (a)) by joining $s_n$ leaves to both of the endvertices $u$ and $v$ of the path (see Figure~\ref{TnDrawing}).  Let $G_n$ be the graph obtained from $T_n$ by adding a new edge $e$ between the vertices $u$ and $v$.  We prove that
\[
\lim_{n\rightarrow\infty}\den(T_n)-\den(G_n)=\tfrac{1}{3},
\]
from which Theorem~\ref{MainDecrease} follows immediately.

\begin{figure}[htb]
\centering
\begin{tikzpicture}
\path (-3.2,-1.1) rectangle (7.2,1.1);
\vertex (0) at (0,0) {};
\vertex (1) at (1,0) {};
\vertex (3) at (3,0) {};
\vertex (4) at (4,0) {};
\vertex (a) at (0,1) {};
\vertex (b) at (-0.7,0.7) {};
\vertex (c) at (-0.7,-0.7) {};
\vertex (d) at (0,-1) {};
\vertex (e) at (4,1) {};
\vertex (f) at (4.7,0.7) {};
\vertex (g) at (4.7,-0.7) {};
\vertex (h) at (4,-1) {};
\node[rotate=90] at (-0.5,0.05) {\footnotesize $\dots$};
\node[rotate=90] at (4.5,0.05) {\footnotesize $\dots$};
\node[above right] at (0,0) {\footnotesize $u$};
\node[above left] at (4,0) {\footnotesize $v$};
\path[-...] (1) edge (2.22,0);
\path (2.22,0) edge (3);
\path
(0) edge (1)
(3) edge (4)
(a) edge (0)
(b) edge (0)
(c) edge (0)
(d) edge (0)
(e) edge (4)
(f) edge (4)
(g) edge (4)
(h) edge (4);
\draw [decorate,decoration={brace,amplitude=4pt,mirror},xshift=-3pt,yshift=0pt]
(-0.8,1.1) -- (-0.8,-1.1) node [left,black,midway,xshift=-3pt]
{\footnotesize $s_n$ leaves};
\draw [decorate,decoration={brace,amplitude=4pt},xshift=3pt,yshift=0pt]
(4.8,1.1) -- (4.8,-1.1) node [right,black,midway,xshift=3pt]
{\footnotesize $s_n$ leaves};
\draw [decorate,decoration={brace,amplitude=4pt,mirror},yshift=-3pt]
(0.1,0) -- (3.9,0) node [below,black,midway,yshift=-3pt]
{\footnotesize path of order $n-2s_n$};
\end{tikzpicture}
\caption{The tree $T_n$.}
\label{TnDrawing}
\end{figure}
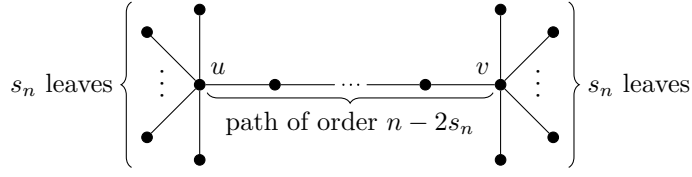

First of all, we can write
\begin{align}\label{ePartition}
S_{G_n}(x)=S_{G_n,e}(x)+S_{T_n}(x),
\end{align}
since the subtrees of $G_n$ can be partitioned into those that contain $e$ and those that do not.  This means that $\mu(G_n)$ is a convex combination (or ``weighted average'') of the means $\mu(G_n,e)$ and $\mu(T_n)$ (see~\cite[Lemma 3.8]{Jamison1983}).  To be precise, we have
\begin{align}\label{Weighted}
    \mu(G_n)=\frac{S_{G_n,e}(1)}{S_{G_n}(1)}\mu(G_n,e)+\frac{S_{T_n}(1)}{S_{G_n}(1)}\mu(T_n).
\end{align}
Dividing through by $n$, we obtain
\begin{align}\label{WeightedDensities}
    \den(G_n)=\frac{S_{G_n,e}(1)}{S_{G_n}(1)}\den(G_n,e)+\frac{S_{T_n}(1)}{S_{G_n}(1)}\den(T_n).
\end{align}
We will show that 
\[
\lim_{n\rightarrow\infty}\den(G_n)=\lim_{n\rightarrow\infty}\den(G_n,e)=\tfrac{2}{3}.
\]
On the other hand, from the proof of~\cite[Corollary 4.3]{MolOellermann2019}, we have
\begin{align}\label{MTn}
\mu(T_n)>n-s_n-1.
\end{align}
(Although $s_n$ was set equal to $\lceil 2\log_2(n)\rceil$ in~\cite{MolOellermann2019}, only the fact that $2^{s_n}\geq n^2$ is necessary for the proof.)
Since $s_n=o(n),$ it follows immediately that $\displaystyle\lim_{n\rightarrow\infty}\den(T_n)=1$.

We first compute the local mean $\mu(G_n,e)$ and show that $\displaystyle\lim_{n\rightarrow\infty}\den(G_n,e)=\tfrac{2}{3}$.  By a straightforward counting argument, we have
\begin{align}\label{LocalPoly}
S_{G_n,e}(x)=(1+x)^{2s_n}S_{C_{n-2s_n},e}(x),
\end{align}
where $C_{n-2s_n}$ is the cycle in $G_n$ induced by the $n-2s_n$ vertices of the $uv$-path in $T_n$.  By Lemma~\ref{logsum}, and then Lemma~\ref{LocalCycleLemma} and a straightforward computation, we have
\begin{align*}
\mu(G_n,e)&=\mu((1+x)^{2s_n})+\mu(C_{n-2s_n,e})\\
&=s_n+\frac{2n-4s_n+2}{3}\\
&=\frac{2n-s_n+2}{3}.
\end{align*}
It follows immediately that
\[
\lim_{n\rightarrow\infty}\den(G_n,e)=\tfrac{2}{3}.
\]

We now show that $\displaystyle\lim_{n\rightarrow \infty}\den(G_n)=\lim_{n\rightarrow\infty}\den(G_n,e).$
We begin by demonstrating that $\displaystyle\lim_{n\rightarrow \infty}\frac{S_{T_n}(1)}{S_{G_n}(1)}=0$.  Since $\displaystyle \frac{S_{T_n}(1)}{S_{G_n}(1)}$ is clearly positive, it suffices to show that $\displaystyle\lim_{n\rightarrow\infty}\frac{S_{T_n}(1)}{S_{G_n}(1)}\leq 0$.
From (\ref{ePartition}), we have $S_{G_n}(1)=S_{G_n,e}(1)+S_{T_n}(1)$.  By  Lemma~\ref{LocalCycleLemma}, evaluating (\ref{LocalPoly}) at $x=1$  gives
\[
S_{G_n,e}(1)=\binom{n-2s_n}{2}2^{2s_n}.
\]
We also use expression (9) from~\cite{MolOellermann2019}, namely
\begin{align}\label{Tn1}
S_{T_n}(1)=2s_n+\binom{n-2s_n-1}{2}+2(n-2s_n-1)2^{s_n}+2^{2s_n}.
\end{align}  
Putting all of this together, we have
\begin{align*}
    \lim_{n\rightarrow \infty}\frac{S_{T_n}(1)}{S_{G_n}(1)}&=\lim_{n\rightarrow \infty}\frac{S_{T_n}(1)}{S_{G_n,e}(1)+S_{T_n}(1)}\\
    &\leq \lim_{n\rightarrow\infty}\frac{S_{T_n}(1)}{S_{G_n,e}(1)}\\
    &=\lim_{n\rightarrow\infty}\frac{2s_n+\binom{n-2s_n-1}{2}+2(n-2s_n-1)2^{s_n}+2^{2s_n}}{\binom{n-2s_n}{2}2^{2s_n}}.
\end{align*}
Applying rough upper bounds in the numerator, and then using the facts that $\lim_{n\rightarrow\infty}s_n/n=0$ and $2^{s_n}\geq n^2$, we find
\begin{align}
    \lim_{n\rightarrow\infty}\frac{S_{T_n}(1)}{S_{G_n}(1)}&\leq \lim_{n\rightarrow\infty}\frac{n+n^2+2n\cdot 2^{s_n}+2^{2s_n}}{\binom{n-2s_n}{2}2^{2s_n}}=0.
\end{align}
It follows immediately that
\[
\lim_{n\rightarrow \infty}\frac{S_{G_n,e}(1)}{S_{G_n}(1)}=\lim_{n\rightarrow \infty}\frac{S_{G_n}(1)-S_{T_n}(1)}{S_{G_n}(1)}=1.
\]
Thus, from (\ref{WeightedDensities}), we obtain
\begin{align*}
    \lim_{n\rightarrow \infty}\den(G_n)&=\lim_{n\rightarrow \infty}\frac{S_{G_n,e}(1)}{S_{G_n}(1)}\cdot\lim_{n\rightarrow\infty}\den(G_n,e)+\lim_{n\rightarrow \infty}\frac{S_{T_n}(1)}{S_{G_n}(1)}\cdot\lim_{n\rightarrow\infty}\den(T_n)\\
    &=\lim_{n\rightarrow\infty}\den(G_n,e)\\
    &=\tfrac{2}{3}.
\end{align*}
This completes the proof that 
\[
\lim_{n\rightarrow\infty}\left[\den(T_n)-\den(G_n)\right]=\tfrac{1}{3},
\]
and Theorem~\ref{MainDecrease} follows immediately.

\section{Increasing the mean subtree order}\label{TreeSection}

For an arbitrary connected graph $G$, we have shown that it is not necessarily true that $\mu(H)>\mu(G)$ for every graph $H$ obtained from $G$ by joining two distinct, nonadjacent vertices.  However, if $G$ is not complete, then we suspect that there exists some graph $H$, obtained from $G$ by joining two distinct, nonadjacent vertices, such that $\mu(H)>\mu(G)$.  Here, we prove this statement in the special case that $G$ is a tree.

\begin{theorem}
For every tree $T$ of order $n\geq 3$, there is a graph $H$, obtained from $T$ by joining two distinct, nonadjacent vertices, such that $\mu(H)>\mu(T).$
\end{theorem}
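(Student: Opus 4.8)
The plan is to reduce the theorem to a statement about the local mean at the added edge, and then to make a careful choice of where to add it. Fix nonadjacent vertices $u,v$, let $e=uv$, and set $H=T+e$. Exactly as in the partition (\ref{ePartition}) and the weighted-average identity (\ref{Weighted}) of the previous section, every subtree of $H$ either contains $e$ or is a subtree of $T$, so $S_H=S_{H,e}+S_T$ and $\mu(H)$ is a convex combination of $\mu(H,e)$ and $\mu(T)$ with both weights strictly between $0$ and $1$ (the weight on $\mu(H,e)$ is positive because $e$ itself is a subtree containing $e$, and the weight on $\mu(T)$ is positive since $S_T(1)>0$). Consequently $\mu(H)>\mu(T)$ if and only if $\mu(H,e)>\mu(T)$, and it suffices to produce a single nonadjacent pair for which the local mean at $e$ beats the global mean of $T$.

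For the choice of pair I would add an edge at distance two with a leaf as one endpoint. A quick experiment shows the choice is not cosmetic: joining two \emph{internal} vertices at distance two can actually decrease the mean (for instance in a ``double broom''), so having a leaf as an endpoint is doing real work. Concretely, pick a leaf $u$ whose neighbour $w$ has degree at least two — the endpoint of a longest path always qualifies when $n\ge 3$ — and let $v$ be any other neighbour of $w$; then $u$ and $v$ are nonadjacent and at distance two, and $H$ contains a single triangle on $u,v,w$. Writing $L_p$ for the local subtree polynomial $S_{B,p}$ of the branch $B$ of $T$ at a triangle-vertex $p$ (the components of $T$ after deleting the two edges $uw,wv$), the same arc decomposition that proves Lemma~\ref{LocalCycleLemma}, now weighted by the branch polynomials, yields
\[
S_{H,e}(x)=L_u(x)\,L_v(x)\,\bigl(1+2L_w(x)\bigr)=x\,L_v(x)\,\bigl(1+2L_w(x)\bigr),
\]
the last equality because $u$ is a leaf gives $L_u(x)=x$. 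Lemma~\ref{logsum} then gives $\mu(H,e)=1+\mu(L_v)+\mu\bigl(1+2L_w\bigr)$.

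It remains to prove $1+\mu(L_v)+\mu(1+2L_w)>\mu(T)$, and this is where the work lies. Splitting the subtrees of $T$ according to whether they contain $w$ expresses the global polynomial in the same branch data, $S_T(x)=(1+x)\bigl(1+L_v(x)\bigr)L_w(x)+R(x)$, where $R(x)$ counts the subtrees of $T$ avoiding $w$ (those confined to a single component of $T-w$); the claim is then equivalent to the polynomial inequality $S_{H,e}'(1)\,S_T(1)>S_T'(1)\,S_{H,e}(1)$. I expect this last step to be the main obstacle. It is genuinely delicate rather than slack: in broom-like trees the two means agree to leading order and differ only by a bounded constant, and — as the decreasing examples show — the corresponding inequality is false once $u$ is allowed to be internal, so any proof must use the fact that $u$ is a leaf essentially. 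Intuitively, $L_u(x)=x$ forces every subtree through $e$ to spend only one vertex on the $u$-side while reaching across to the $v$- and $w$-branches, in contrast to the subtrees counted by $R$, which are trapped in one component. My intended route is to substitute the explicit factored forms of $L_v$, $L_w$, and $R$, clear denominators, and track the sign, using that choosing $u$ and $w$ along a longest path keeps every branch shallow in order to bound the $R$-term. Should the triangle computation prove too unwieldy, the natural fallback is to instead join the two ends of a longest path, turning the spine into a cycle to which the computation of Lemma~\ref{LocalCycleLemma} can be adapted, at the cost of a longer arc sum.
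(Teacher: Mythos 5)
Your reduction is sound as far as it goes: the partition $S_H=S_{H,e}+S_T$ does make $\mu(H)$ a convex combination of $\mu(H,e)$ and $\mu(T)$ with both weights positive, and your factorization $S_{H,e}(x)=x\,L_v(x)\bigl(1+2L_w(x)\bigr)$ for the triangle on $u,w,v$ is correct and is the same kind of branch decomposition the paper uses. But the proof stops exactly where the difficulty begins: you never establish the inequality $1+\mu(L_v)+\mu(1+2L_w)>\mu(T)$, and you say yourself that this is ``the main obstacle'' and only describe an intended route (substitute explicit forms, clear denominators, track signs). That route is not a proof, and as you correctly observe the inequality is tight to within a bounded constant on broom-like trees, so there is no slack to wave at. This is a genuine gap, not a routine verification.

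The idea you are missing is to avoid comparing $\mu(H,e)$ with $\mu(T)$ directly and instead interpose the \emph{local mean at a vertex of the triangle}. The paper chooses a vertex $u$ with two pendant paths $P,Q$ attached, joins their first vertices $v$ and $w$ (so the new edge need not have a leaf endpoint at all), and then uses Jamison's Theorem~3.9, $\mu(T,u)>\mu(T)$, to reduce the problem to $\mu(H,e)\geq\mu(T,u)$. Because $S_{H,e}$ and $S_{T,u}$ factor over the same branches, Lemma~\ref{logsum} turns that comparison into the triviality $1+2S_{R,u}(1)>\mu(R,u)$, which holds since $S_{R,u}(1)$ is at least the order of $R$ while $\mu(R,u)$ is at most it. Your configuration could in fact be rescued the same way: writing $S_{T,w}(x)=L_w(x)(1+x)\bigl(1+L_v(x)\bigr)$ and comparing with your expression for $\mu(H,e)$ reduces $\mu(H,e)>\mu(T,w)$ to $\tfrac12+\tfrac{\mu(L_v)}{1+L_v(1)}\geq\tfrac{\mu(L_w)}{1+2L_w(1)}$, which follows from $\mu(L_w)\leq L_w(1)$. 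Without some such detour through a local vertex mean, the direct inequality you pose is the hard part of the theorem, and your proposal leaves it open.
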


\begin{proof}
Let $u$ be a vertex of $T$ such that at least two components of $T-u$,  say $P$ and $Q$, have the property that $T[V(P)\cup \{u\}]$ and $T[V(Q)\cup\{u\}]$ are paths (see Figure~\ref{duck}).  Such a vertex $u$ is guaranteed to exist.  Suppose that $P$ has order $p$, and $Q$ has order $q$.  Let $v$ be the vertex of $P$ adjacent to $u$ in $T$, and let $w$ be the vertex of $Q$ adjacent to $u$ in $T$.  Let $R=T-(V(P)\cup V(Q))$.

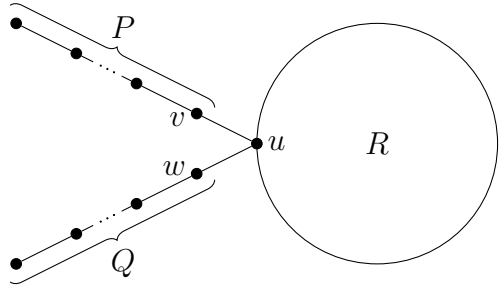
\begin{figure}[htb]
\begin{center}
\begin{tikzpicture}[scale=0.8]
\vertex (u) at (0,0) {};
\draw (0,0) node[right]{$u$};
\vertex (v) at (-1,0.5) {};
\draw (-1,0.5) node[left, yshift=-2pt]{$v$};
\vertex (v1) at (-2,1) {};
\vertex (v2) at (-3,1.5) {};
\vertex (v3) at (-4,2) {};
\vertex (w) at (-1,-0.5) {};
\draw (-1,-0.5) node[left, yshift=2pt]{$w$};
\vertex (w1) at (-2,-1) {};
\vertex (w2) at (-3,-1.5) {};
\vertex (w3) at (-4,-2) {};
\draw (2,0) circle (2) node{$R$};
\path[-...] 
(v1) edge (-2.72,1.36)
(w1) edge (-2.72,-1.36);
\path
(-2.72,1.36) edge (v2)
(-2.72,-1.36) edge (w2)
(u) edge (v)
(v) edge (v1)
(v2) edge (v3)
(u) edge (w)
(w) edge (w1)
(w2) edge (w3);
\draw [decorate,decoration={brace,amplitude=4pt}]
(-4.1,2.3) -- (-0.7,0.6) node [above,black,midway,xshift=4pt,yshift=3pt]
{$P$};
\draw [decorate,decoration={brace,amplitude=4pt,mirror}]
(-4.1,-2.3) -- (-0.7,-0.6) node [below,black,midway,xshift=4pt,yshift=-3pt]
{$Q$};
\end{tikzpicture}
\end{center}
\caption{The tree $T$.}\label{duck}
\end{figure}

Let $H$ be the graph obtained from $T$ by joining vertices $v$ and $w$; call this new edge $e$.  We claim that $\mu(H)>\mu(T)$.  Since 
\[
S_{H}(x)=S_{H,e}(x)+S_T(x),
\]
we see that $\mu(H)$ is a weighted average of $\mu(H,e)$ and $\mu(T)$, so it suffices to show that $\mu(H,e)>\mu(T).$
By~\cite[Theorem 3.9]{Jamison1983}, for all $u\in V(T)$, we have
\[
\mu(T,u)>\mu(T),
\]
so it suffices to show that $\mu(H,e)\geq \mu(T,u).$  For any nonnegative number $k$, let $f_k(x)=\sum_{i=0}^k x^i$.  By straightforward counting arguments, we have
\begin{align}
    S_{H,e}(x)
    &=x^2f_{p-1}(x)f_{q-1}(x)(1+2S_{R,u}(x))\label{f1}
\end{align}
and
\begin{align}
    S_{T,u}(x)
    &=f_p(x)f_q(x)S_{R,u}(x).\label{f2}
\end{align}
Now for any $k\geq 0$, by a straightforward computation, we have $\mu(f_k(x))=k/2$.  Applying Lemma~\ref{logsum} to (\ref{f1}) and (\ref{f2}), we find
\begin{align*}
\mu(H,e)&=\mu(x^2)+\mu(f_{p-1}(x))+\mu(f_{q-1}(x))+\mu(1+2S_{R,u}(x))\\
&=2+\tfrac{p-1}{2}+\tfrac{q-1}{2}+\frac{2S'_{R,u}(1)}{1+2S_{R,u}(1)}\\
&=\tfrac{p}{2}+\tfrac{q}{2}+1+\frac{2S_{R,u}(1)}{1+2S_{R,u}(1)}\mu(R,u).
\end{align*}
and
\[
\mu(T,u)=\mu(f_p(x))+\mu(f_q(x))+\mu(R,u)=\tfrac{p}{2}+\tfrac{q}{2}+\mu(R,u),
\]
respectively.  So it suffices to show that
\[
1+\frac{2S_{R,u}(1)}{1+2S_{R,u}(1)}\mu(R,u)>\mu(R,u),
\]
or equivalently,
\[
1+2S_{R,u}(1)>\mu(R,u).
\]
Since $S_{R,u}(1)$ is at least the order of $R$, while $\mu(R,u)$ is at most the order of $R$, the result follows immediately. 
\end{proof}

Note that in the statement of Conjecture~\ref{AddConjecture}, we stipulate that the graph $H$ must be obtained from $G$ by joining two \emph{nonadjacent} vertices.  If this condition is dropped, then the statement becomes much easier to prove.  We show that for every multigraph $G$, there is a multigraph $H$, obtained from $G$ by adding an edge between a pair of distinct (but possibly adjacent) vertices of $G$, such that $\mu(H)>\mu(G)$.


We first require a lemma that follows almost directly from Theorem~3.2 of \cite{Andriantiana2019}. We introduce some notation used in the proof of this lemma. Let $\mathcal{T}_{G}^{*}$ be the set of all subtrees of $G$ of order at least $2$, and define $S_{G}^{*}(x)$ by 
\[
S_{G}^{*}(x)=\sum_{T\in\mathcal{T}_{G}^*}x^{|V(T)|}.
\]
 The mean subtree order of all trees of order at least $2$ of $G$ is then defined analogously as 
 \[
 \mu^{*}(G)=\frac{S_{G}^{*\prime}(1)}{S_{G}^{*}(1)}.
 \]
 Clearly, we have $\mu^*(G)>\mu(G)$ for every multigraph $G$.

\begin{lemma}\label{DecMeanEdgeDeletionLemma}
If $G$ is a multigraph  with $E(G)\neq\emptyset$, then there exists an edge $e\in E(G)$ such that $\mu(G,e)>\mu(G)>\mu(G-e)$.
\end{lemma}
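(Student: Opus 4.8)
The plan is to reduce the two-sided inequality to the one-sided statement that some edge $e$ satisfies $\mu(G,e)>\mu(G)$. Fix any edge $e$. Partitioning the subtrees of $G$ by whether or not they contain $e$, and noting that those avoiding $e$ are exactly the subtrees of $G-e$, gives $S_G(x)=S_{G,e}(x)+S_{G-e}(x)$. Hence, exactly as in~\cite[Lemma~3.8]{Jamison1983}, $\mu(G)$ is a convex combination of $\mu(G,e)$ and $\mu(G-e)$ with strictly positive weights (the edge $e$ by itself is a subtree containing $e$, and $G-e$ retains all $n\geq 2$ vertices). Therefore $\mu(G)$ lies strictly between $\mu(G,e)$ and $\mu(G-e)$ whenever the two differ; in particular, producing a single edge with $\mu(G,e)>\mu(G)$ automatically forces $\mu(G-e)<\mu(G)$ as well, which is the full conclusion.

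To locate such an edge I would average the local subtree polynomials over all edges. Writing $a_k$ for the number of subtrees of $G$ of order $k$, each subtree of order $k\geq 2$ contains exactly $k-1$ edges, so
\[
\sum_{e\in E(G)}S_{G,e}(x)=\sum_{k\geq 2}(k-1)\,a_k\,x^{k}.
\]
Set $\bar\mu=\left(\sum_{e}S_{G,e}'(1)\right)\big/\left(\sum_{e}S_{G,e}(1)\right)$, which is a weighted average of the local means $\mu(G,e)$ with positive weights $S_{G,e}(1)\geq 1$; thus it suffices to show $\bar\mu>\mu(G)$, since then $\max_e\mu(G,e)\geq\bar\mu>\mu(G)$. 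Letting $K$ denote the order of a subtree of $G$ chosen with probability proportional to $a_k$, so that $\mu(G)=\mathbb{E}[K]$, a short computation identifies $\bar\mu=\mathbb{E}[K(K-1)]/\mathbb{E}[K-1]$ and yields the clean identity
\[
\bar\mu-\mu(G)=\frac{\mathrm{Var}(K)}{\mathbb{E}[K-1]}.
\]
Since $E(G)\neq\emptyset$, there are subtrees of order $1$ and of order $2$, so $\mathrm{Var}(K)>0$ and $\mathbb{E}[K-1]=\mu(G)-1>0$; hence $\bar\mu>\mu(G)$. This averaging inequality is, in essence, Theorem~3.2 of~\cite{Andriantiana2019} (expressed through $\mu^*$, via the chain $\bar\mu\geq\mu^*(G)>\mu(G)$), and I would invoke it directly in the write-up.

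The only real obstacle is the strict inequality $\bar\mu>\mu(G)$; everything else is bookkeeping. Its content is that reweighting each subtree by its number of edges $k-1$, an increasing function of the order, strictly raises the mean order, and this fails only in the degenerate case where every subtree has the same order. That case is excluded the moment $G$ has an edge, since then subtrees of two distinct orders exist. With $\bar\mu>\mu(G)$ established, I would choose an edge $e$ achieving $\mu(G,e)\geq\bar\mu$ and feed it into the convex-combination reduction of the first paragraph to obtain $\mu(G,e)>\mu(G)>\mu(G-e)$, completing the proof.
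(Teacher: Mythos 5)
Your proof is correct, but it reaches the key edge $e$ by a genuinely different route than the paper. The paper first disposes of the degenerate case in which $G$ has no subtree of order $3$, and otherwise invokes Theorem~3.2 of \cite{Andriantiana2019} as a black box to produce an edge $e$ with $\mu^*(G)>\mu^*(G-e)$; the decomposition $S_G^{*}(x)=S_{G,e}(x)+S_{G-e}^{*}(x)$ then gives $\mu(G,e)>\mu^*(G)>\mu(G)$, and the usual decomposition $S_G(x)=S_{G,e}(x)+S_{G-e}(x)$ finishes with $\mu(G)>\mu(G-e)$. You share that final convex-combination reduction, but you locate $e$ by a self-contained averaging argument: summing the local polynomials over all edges (each subtree of order $k$ being counted $k-1$ times) and establishing the identity $\bar\mu-\mu(G)=\mathrm{Var}(K)/\mathbb{E}[K-1]>0$, which I have checked. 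What your approach buys is independence from the external reference and the elimination of the case split: your variance argument needs only two distinct subtree orders, and orders $1$ and $2$ always occur once $E(G)\neq\emptyset$, whereas the cited theorem operates on nonempty edge sets and hence needs subtrees of orders $2$ and $3$. What the paper's approach buys is brevity and the intermediate fact $\mu^*(G)>\mu^*(G-e)$, i.e., that deleting $e$ decreases the mean order over nontrivial subtrees, which your argument does not directly establish. Your closing remark that $\bar\mu$ relates to the cited theorem via the chain $\bar\mu\geq\mu^*(G)>\mu(G)$ is also sound, since reweighting by the increasing function $k-1$ cannot decrease the mean.
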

\begin{proof}
Let $G$ be a multigraph of order $n$ with at least one edge.  If $G$ contains no subtree of order $3$, then for every edge $e\in E(G)$, we have $\mu(G,e)=2>\mu(G)$.  So we may assume that $G$ contains a subtree of order $3$.

Let $\mathcal{B}$ be the set of all nonempty subsets of $E(G)$ (multiple edges distinguished) that induce a subtree of $G$.  Since $G$ contains subtrees of orders $2$ and $3$, the cardinalities of the elements of $\mathcal{B}$ are not all the same.  Therefore, by \cite[Theorem 3.2]{Andriantiana2019}, it follows that there exists an edge $e\in E(G)$ such that $\mu^{*}(G)>\mu^{*}(G-e)$.  Since $S_G^{*}(x)=S_{G,e}(x)+S_{G-e}^{*}(x)$, we have that $\mu^{*}(G)$ is a weighted average of $\mu(G,e)$ and $\mu^{*}(G-e)$, so $\mu(G,e)>\mu^{*}(G)>\mu(G)$.  The fact that $\mu(G)>\mu(G-e)$ now follows from the fact that $\mu(G)$ is a weighted average of $\mu(G,e)$ and $\mu(G-e)$.
\end{proof}

One might hope that Lemma~\ref{DecMeanEdgeDeletionLemma} could be used to show that if $G$ has minimum mean subtree order among all connected graphs of a given order, then $G$ is a tree. We remark, however, that this fact does not follow immediately from Lemma~\ref{DecMeanEdgeDeletionLemma}.  While Lemma~\ref{DecMeanEdgeDeletionLemma} guarantees that every graph $G$ contains an edge $e$ whose deletion decreases the mean subtree order, it does not guarantee that the edge $e$ lies on a cycle of $G$.
We can, however, use Lemma~\ref{DecMeanEdgeDeletionLemma} to demonstrate the following result.

\begin{proposition}
Let $G$ be a multigraph of order at least $2$.  Then there is a multigraph $H$, obtained from $G$ by adding a new edge between a pair of distinct vertices of $G$, such that $\mu(H)>\mu(G)$.
\end{proposition}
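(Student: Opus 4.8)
The plan is to add an edge \emph{parallel} to a suitably chosen existing edge and to reduce the entire statement to Lemma~\ref{DecMeanEdgeDeletionLemma}. First I would dispose of the degenerate case in which $G$ has no edges: since $G$ has order at least $2$, its only subtrees are its isolated vertices, so $\mu(G)=1$, and adding any edge between two distinct vertices creates a subtree of order $2$, forcing $\mu(H)>1=\mu(G)$.

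So suppose $E(G)\neq\emptyset$. By Lemma~\ref{DecMeanEdgeDeletionLemma}, there is an edge $e\in E(G)$ with $\mu(G,e)>\mu(G)$. Let $H$ be the multigraph obtained from $G$ by adding a new edge $e'$ parallel to $e$, that is, joining the same pair of endpoints as $e$. The crux of the argument is a pair of polynomial identities. Since $H-e'=G$, the subtrees of $H$ that avoid $e'$ are precisely the subtrees of $G$; and since a tree cannot contain two parallel edges, every subtree of $H$ containing $e'$ avoids $e$, so swapping $e'$ for $e$ gives a vertex-set-preserving bijection between the subtrees of $H$ containing $e'$ and the subtrees of $G$ containing $e$. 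Hence
\[
S_{H,e'}(x)=S_{G,e}(x)\quad\mbox{and}\quad S_H(x)=S_{H,e'}(x)+S_G(x).
\]

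From the first identity, $\mu(H,e')=\mu(G,e)$. From the second, $\mu(H)$ is a weighted average of $\mu(H,e')$ and $\mu(G)$, exactly as in (\ref{Weighted}); and since $S_{G,e}(1)\geq 1$, the weight on $\mu(H,e')$ is positive. Therefore $\mu(G)<\mu(H)<\mu(H,e')=\mu(G,e)$, and in particular $\mu(H)>\mu(G)$, as required.

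I do not expect a serious obstacle here: once the parallel-edge construction is chosen, the only point needing care is the identity $S_{H,e'}(x)=S_{G,e}(x)$, which the swap bijection delivers immediately. The one thing to keep in mind is that the added edge must be \emph{parallel} to $e$ rather than joining two nonadjacent vertices, since the argument hinges on matching the local polynomial at $e'$ to that at the pre-existing edge $e$ furnished by Lemma~\ref{DecMeanEdgeDeletionLemma}. This is exactly why the nonadjacency hypothesis of Conjecture~\ref{AddConjecture} is dropped in this proposition.
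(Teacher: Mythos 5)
Your proposal is correct and follows essentially the same route as the paper's own proof: dispose of the edgeless case, invoke Lemma~\ref{DecMeanEdgeDeletionLemma} to find $e$ with $\mu(G,e)>\mu(G)$, add a parallel edge, observe that $S_{H,e'}(x)=S_{G,e}(x)$ because no subtree can contain both parallel edges, and conclude via the weighted-average decomposition $S_H(x)=S_{G,e}(x)+S_G(x)$. The swap bijection you describe is just a slightly more explicit justification of the identity the paper states directly.
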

\begin{proof}
If $G$ has no edges, then for every pair of distinct vertices $u,v\in V(G)$, the graph $H$ obtained from $G$ by joining $u$ and $v$ satisfies $\mu(H)>\mu(G)$.  So we may assume that $E(G)\neq \emptyset$.  By Lemma~\ref{DecMeanEdgeDeletionLemma}, there is an edge $e\in E(G)$ such that $\mu(G,e)>\mu(G)$.  Let $H$ be the graph obtained from $G$ by adding a new edge $f$ between the endvertices of $e$.  Note that no subtree of the multigraph $H$ contains both $e$ and $f$, since they induce a cycle. Therefore, we have $S_{H,f}(x)=S_{G,e}(x)$.  So we can write
\[
S_H(x)=S_{H,f}(x)+S_{G}(x)=S_{G,e}(x)+S_G(x).
\]
It follows that $\mu(H)$ is a weighted average of $\mu(G,e)$ and $\mu(G)$. Since $\mu(G,e)>\mu(G)$, we must have $\mu(H)>\mu(G)$.
\end{proof}
\section{Conclusion}

While we have shown that adding an edge to a graph can drastically decrease its mean subtree order, we do not know whether our examples are extremal.

\begin{problem}
Suppose that a graph $H$ is obtained from a connected graph $G$ by adding an edge between two nonadjacent vertices of $G$.  Determine sharp bounds on $\den(H)-\den(G).$
\end{problem}

\noindent
We have shown that the difference $\den(H)-\den(G)$ can be arbitrarily close to $-1/3$.  On the other hand, we remark that $\den(C_n)-\den(P_n)$ approaches $1/6$ asymptotically (see~\cite[Corollary 3.2]{ChinGordonMacpheeVincent2018}).  We suspect that these are in fact the extremal values for the difference.

Another open problem is to decide whether or not Conjecture~\ref{AddConjecture} holds in general.  We have shown that it holds in the special case that $G$ is a tree, and that it holds in general if we drop the requirement that the two vertices being joined are nonadjacent.  If true, Conjecture~\ref{AddConjecture} would imply that among all connected graphs of order $n$, the complete graph $K_n$ has the maximum mean subtree order, while the path $P_n$ has the minimum mean subtree order.  (Jamison~\cite{Jamison1983} demonstrated that $P_n$ has the minimum mean subtree order among all trees of order $n$.)  This remains a significant open problem on its own.


\end{document}